\newtheorem{theorem}{Theorem}[section]
\newtheorem{proposition}[theorem]{Proposition}
\newtheorem{lemma}[theorem]{Lemma}
\theoremstyle{definition}
\newtheorem{definition}[theorem]{Definition}
\newtheorem{remark}[theorem]{Remark}
\newcommand{\Z}{{\mathbb Z}}
\newcommand{\N}{{\mathbb N}}
\newcommand{\Q}{{\mathbb Q}}
\numberwithin{equation}{section}
\begin{document}

\title[Comparability of clopen sets]{Comparability of clopen sets in a 
zero-dimensional dynamical system}

\author[H. Yuasa]{Hisatoshi Yuasa}

\address{
17-23-203 Idanakano-cho, Nakahara-ku, Kawasaki Kanagawa 211-0034, JAPAN.}

\email{hisatoshi\_yuasa@ybb.ne.jp}

\keywords{unique ergodicity, totally ordered group.}

\subjclass[2000]{ Primary 37B05}

\maketitle

\begin{abstract} 
Let $\varphi$ be a homeomorphism on a totally disconnected, compact metric space 
$X$. Then, the following are equivalent:
\begin{enumerate}[(i)]
\item
$\varphi$ is uniquely ergodic;
\item
any clopen subsets of $X$ are comparable with respect to a certain binary 
relation;
\item
an ordered group, which is a quotient of the group of 
integer\nobreakdash-valued continuous functions modulo infinitesimals, is 
totally ordered.
\end{enumerate}
\end{abstract}

\section{Introduction}\label{intro}

Let $\varphi$ be a homeomorphism on a totally disconnected, 
compact metric space $X$. Let $M_\varphi$ denote the set of $\varphi$-invariant 
probability measures. 
For clopen sets $A,B \subset X$, we write $A \ge B$ 
either if $\mu(A) > \mu(B)$ for all $\mu \in M_\varphi$, or if $\mu(A)=\mu(B)$ 
for all $\mu \in M_\varphi$. This relation $\ge$ does not necessarily hold between 
given clopen sets (Remark~\ref{incomp}). 
If $\varphi$ is minimal, then $A \ge B$ induces an 
embedding of $B$ into $A$ via finite or countable Hopf-equivalence \cite{GW}. 
The embedding plays significant roles in analyses of orbit 
structures of Cantor minimal systems \cite{GPS2,GW,HKY} and also in those for 
locally compact Cantor minimal systems \cite{Matui}. We refer the reader to 
\cite{Y1,Y5} for other facts concerning Hopf-equivalence. 

Another important object in analyses of the orbit structures is ordered group. 
Let $G_\varphi$ denote the quotient group of the abelian group $C(X,\Z)$ of 
integer-valued continuous functions on $X$ by a subgroup:
\[
Z_\varphi=\{f \in C(X,\Z)|\int_X f d \mu = 0 \textrm{ for all } \mu \in 
M_\varphi\}.
\]
Let $G_\varphi^+=\{[f] \in G_\varphi| f \ge 0\}$, where $[f]$ is the equivalence 
class of $f \in C(X,\Z)$. If $\varphi$ is minimal, then 
the ordered group $(G_\varphi,G_\varphi^+)$ with the canonical order unit is a 
complete invariant for orbit equivalence \cite{GPS}.

If $\varphi$ is uniquely ergodic, then any clopen subsets of $X$ are 
comparable. This fact may lead us to have a question whether a 
non\nobreakdash-uniquely ergodic 
system always has incomparable clopen sets, or not. The goal of this paper is 
to give an affirmative answer to this question: 
\begin{theorem}\label{essential}
The following are equivalent{\rm :}
\begin{enumerate}[{\rm (i)}]
\item\label{UE}
$\varphi$ is uniquely ergodic{\rm ;}
\item\label{COMP}
any two clopen subsets of $X$ are comparable{\rm ;}
\item\label{TOTALLY}
the ordered group $(G_\varphi,G_\varphi^+)$ is totally ordered.
\end{enumerate}
\end{theorem}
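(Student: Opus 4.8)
The plan is to prove the forward implications (i)$\Rightarrow$(ii) and (i)$\Rightarrow$(iii) directly, and to prove (ii)$\Rightarrow$(i) and (iii)$\Rightarrow$(i) in contrapositive form; together these yield both equivalences. The two converse directions share one elementary remark: $\varphi$ fails to be uniquely ergodic exactly when some clopen $V$ has $\mu(V)$ not constant as $\mu$ ranges over $M_\varphi$. Indeed $M_\varphi\neq\emptyset$ by the Krylov--Bogolyubov theorem, and since $X$ is totally disconnected the locally constant functions are uniformly dense in $C(X,\R)$, so the affine maps $\mu\mapsto\mu(V)$ with $V$ clopen already separate the points of $M_\varphi$; hence if all of them were constant we would have $|M_\varphi|=1$. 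Implication (i)$\Rightarrow$(ii) is then immediate: with $M_\varphi=\{\mu\}$ exactly one of $\mu(A)>\mu(B)$, $\mu(A)=\mu(B)$, $\mu(A)<\mu(B)$ holds for clopen $A,B$, giving $A\ge B$, $A\ge B$, $B\ge A$ respectively.

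For (iii)$\Rightarrow$(i) in contrapositive form, assume $\varphi$ is not uniquely ergodic and pick $\mu_1,\mu_2\in M_\varphi$ and a clopen $V$ with $a:=\mu_2(V)<\mu_1(V)=:b$; choosing a rational $p/q\in(a,b)$ with $0<p<q$ (possible since $0\le a<b\le1$) and setting $f:=q\mathbf{1}_V-p\mathbf{1}_X\in C(X,\Z)$, one gets $\int_X f\,d\mu_1=qb-p>0$ and $\int_X f\,d\mu_2=qa-p<0$; as $g\ge0$ forces $\int_X g\,d\nu\ge0$ for all $\nu\in M_\varphi$, neither $[f]$ nor $-[f]$ lies in $G_\varphi^+$, so $(G_\varphi,G_\varphi^+)$ is not totally ordered. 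For (ii)$\Rightarrow$(i) a sharper clopen set is required: by the Krein--Milman theorem $M_\varphi$ has at least two extreme points, i.e. two distinct ergodic measures $\nu_1,\nu_2$, which are mutually singular, so by regularity and total disconnectedness there is a clopen $V$ with $\nu_1(V)>3/4$ and $\nu_2(V)<1/4$. With $A:=V$ and $B:=X\setminus V$ the affine function $\mu\mapsto\mu(A)-\mu(B)=2\mu(V)-1$ exceeds $1/2$ at $\nu_1$ and is less than $-1/2$ at $\nu_2$; thus it is neither everywhere positive, nor everywhere negative, nor identically zero on $M_\varphi$, so $A$ and $B$ are incomparable.

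The real obstacle is (i)$\Rightarrow$(iii): assuming $\varphi$ uniquely ergodic with measure $\mu$, one must show that every $[f]$ with $\int_X f\,d\mu\ge0$ is already $[g]$ for some $g\ge0$ --- the Birkhoff sums $S_nf$ become nonnegative for large $n$, but represent $n[f]$ rather than $[f]$, so a genuine realizability statement cannot be avoided. My plan is first to pass to $Y:=\supp\mu$: every invariant measure of $\varphi|_Y$ is one of $\varphi$, so $\varphi|_Y$ is uniquely ergodic with the same $\mu$, and the restriction map $C(X,\Z)\to C(Y,\Z)$ is surjective, has kernel inside $Z_\varphi$, and carries nonnegative functions onto nonnegative functions, so $(G_\varphi,G_\varphi^+)\cong(G_{\varphi|_Y},G_{\varphi|_Y}^+)$. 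Since $\mu$ now has full support, the uniform convergence of the ergodic averages forces $\varphi|_Y$ to be minimal. If $Y$ is finite it is a single periodic orbit and $\mu$ is uniformly distributed on it, so the claim reduces to arithmetic in $\frac{1}{N}\Z_{\ge0}$. If $Y$ is a Cantor set then $(Y,\varphi|_Y)$ is a minimal Cantor system, hence $C(Y,\Z)$ modulo coboundaries is a \emph{simple} dimension group whose only state is $\tau(\cdot)=\int_Y\cdot\,d\mu$ \cite{GPS}; the positive cone of a simple dimension group is $\{x:\tau'(x)>0\text{ for every state }\tau'\}\cup\{0\}$, which here is $\{x:\tau(x)>0\}\cup\{0\}$, and quotienting by the infinitesimals $\ker\tau$ --- which yields precisely $G_{\varphi|_Y}$ --- turns the order into $\gamma\ge0\iff\tau(\gamma)\ge0$, a total order. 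I expect this dimension-group step, together with the identification of the state space of the $K^0$-group of a minimal Cantor system with $M_\varphi$, to absorb most of the work (or of the appeal to \cite{GPS}); everything else is routine bookkeeping.
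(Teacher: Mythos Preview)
Your proof is correct, but the logical route differs from the paper's. The paper proves the cycle $(\mathrm{i})\Rightarrow(\mathrm{ii})\Rightarrow(\mathrm{iii})\Rightarrow(\mathrm{i})$, with the main effort in $(\mathrm{ii})\Rightarrow(\mathrm{iii})$: it first shows (much as you reduce to $\supp\mu$) that all invariant measures live on a single minimal set, then---using Lemma~\ref{three}, which rests on the Glasner--Weiss countable Hopf-equivalence result \cite{GW}---runs an explicit inductive procedure that cancels the ``negative'' summands $b_j$ one by one in a decomposition $a=\sum a_i-\sum b_j$ with $a_i,b_j\in D_\varphi$, deciding the sign of an arbitrary $a\in G_\varphi$ directly from the comparability hypothesis. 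Your scheme instead factors through $(\mathrm{i})$: the step $(\mathrm{ii})\Rightarrow(\mathrm{i})$ via two mutually singular ergodic measures is pleasantly elementary and self-contained, while your $(\mathrm{i})\Rightarrow(\mathrm{iii})$ trades the paper's combinatorics for the structure theory of simple dimension groups (the identification of the state space of $K^0$ with $M_\varphi$ and the fact that in a simple dimension group the positive cone equals $\{x:\tau(x)>0\ \text{for every state}\ \tau\}\cup\{0\}$), imported from \cite{HPS,GPS,Effros}. The $(\mathrm{iii})\Rightarrow(\mathrm{i})$ arguments are essentially identical. In short: the paper stays closer to the clopen-set combinatorics and leans on \cite{GW}, whereas you lean on the dimension-group machinery; both are valid, and your detour through $(\mathrm{i})$ makes the equivalence $(\mathrm{ii})\Leftrightarrow(\mathrm{i})$ transparent at the cost of invoking heavier ordered-group results for $(\mathrm{i})\Rightarrow(\mathrm{iii})$.
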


\paragraph{{\it Acknowledgements}} Although the previous version of this paper was 
rejected by a journal, the referee gave the author useful comments, 
which made a proof of the theorem much shorter. The author appreciates it.

\section{Preliminaries}

We freely use terminology concerning (partially) ordered groups or 
dimension groups; see \cite{DHS,Effros,GPS} for example. 
Let $K^0(X,\varphi)$ denote a quotient group $C(X,\Z)/B_\varphi$, where 
$B_\varphi=\{f \circ \varphi -f | f \in C(X,\Z)\}$. 
Put $K^0(X,\varphi)^+=\{[f] \in K^0(X,\varphi)|f \ge 0\}$. 
If any point in $X$ is chain recurrent for $\varphi$, then 
$(K^0(X,\varphi),K^0(X,\varphi)^+)$ becomes an ordered group \cite{BH}. 
This fact is proved also by \cite{Y5} in connection with finite 
Hopf\nobreakdash-equivalence. If $\varphi$ is minimal (resp.\ almost 
minimal), then $(K^0(X,\varphi),K^0(X,\varphi)^+)$ becomes a simple 
(resp.\ almost simple) dimension group \cite{HPS} 
(resp.\ \cite{Danilenko}). In each of these cases, 
$(K^0(X,\varphi),K^0(X,\varphi)^+)$ with the canonical order unit $[\chi_X]$ 
is a complete invariant for strong orbit equivalence \cite{GPS,Danilenko}, 
where $\chi_X$ is the characteristic function of $X$. 

Suppose $\varphi$ has a unique minimal set. By \cite[Theorem~1.1]{HPS}, any 
point is chain recurrent for $\varphi$. Given $\mu \in M_\varphi$, define a 
state $\tau_\mu$ on $(K^0(X,\varphi),[\chi_X])$ by for $f \in C(X,\Z)$, 
\begin{equation*}\label{correspondence}
\tau_\mu([f])=\int_X f d \mu.
\end{equation*}
The map $\mu \mapsto \tau_\mu$ is a bijection between $M_\varphi$ and the set 
of states on $(K^0(X,\varphi),[\chi_X])$; see for details \cite[Theorem~5.5]{HPS}. 
\begin{proposition}\label{misc}
$(G_\varphi, G_\varphi^+)$ is an ordered group. 
\end{proposition}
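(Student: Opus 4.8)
The plan is to verify directly the three axioms of a partially ordered abelian group for the pair $(G_\varphi,G_\varphi^+)$: namely $G_\varphi^++G_\varphi^+\subseteq G_\varphi^+$, $G_\varphi^+-G_\varphi^+=G_\varphi$, and $G_\varphi^+\cap(-G_\varphi^+)=\{0\}$. I will write $C(X,\Z)^+$ for the set of nonnegative functions in $C(X,\Z)$, so that $G_\varphi^+$ is by definition the image of $C(X,\Z)^+$ under the quotient map $f\mapsto[f]$. Note that $M_\varphi\neq\emptyset$, since $X$ is compact and $\varphi$ a homeomorphism (in the degenerate case $M_\varphi=\emptyset$ one has $Z_\varphi=C(X,\Z)$, hence $G_\varphi=\{0\}$, and the statement is vacuous).

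The first two axioms are routine and I would dispose of them quickly. Closure under addition holds because the sum of two elements of $C(X,\Z)^+$ again lies in $C(X,\Z)^+$, so $[f]+[g]=[f+g]\in G_\varphi^+$ whenever $[f],[g]$ are represented by nonnegative functions. For the second axiom, given $f\in C(X,\Z)$ I would use the decomposition $f=\max(f,0)-\max(-f,0)$ with both summands in $C(X,\Z)^+$, which gives $[f]=[\max(f,0)]-[\max(-f,0)]\in G_\varphi^+-G_\varphi^+$.

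The heart of the matter is the third axiom, and this is the step I expect to carry the content. Suppose $[f]\in G_\varphi^+\cap(-G_\varphi^+)$; unwinding the definitions, there are $g,h\in C(X,\Z)^+$ with $f-g\in Z_\varphi$ and $f+h\in Z_\varphi$. Subtracting, $g+h=(f+h)-(f-g)\in Z_\varphi$, so $\int_X(g+h)\,d\mu=0$ for every $\mu\in M_\varphi$. Since $g$ and $h$ are nonnegative, this forces $\int_X g\,d\mu=0$ for every $\mu\in M_\varphi$, and then $f-g\in Z_\varphi$ yields $\int_X f\,d\mu=\int_X g\,d\mu=0$ for every $\mu\in M_\varphi$, i.e.\ $f\in Z_\varphi$ and $[f]=0$. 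The only place where the hypotheses genuinely enter is this last implication, which works precisely because $Z_\varphi$ is the common kernel of the \emph{positive} linear functionals $f\mapsto\int_X f\,d\mu$: positivity of the integrand $g+h$ is what lets the vanishing of its integral propagate to $g$. I note in passing that the same estimate (using $-n\le f\le n$ for a suitable $n\in\N$, valid since $f$ is bounded) shows $[\chi_X]$ to be an order unit for $G_\varphi$, although that is not required here.
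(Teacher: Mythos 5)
Your proof is correct and follows essentially the same route as the paper: the key step, deducing $G_\varphi^+\cap(-G_\varphi^+)=\{0\}$ by noting that $g+h\in Z_\varphi$ with $g,h\ge 0$ forces $\int_X g\,d\mu=0$ for every $\mu\in M_\varphi$, is exactly the paper's argument. The paper simply dismisses the remaining axioms as readily verified, which you spell out explicitly (correctly) via $f=\max(f,0)-\max(-f,0)$.
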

\begin{proof}
Suppose $[f] \in G_\varphi^+ \cap (-G_\varphi^+)$ with $f \in C(X,\Z)$. 
There are nonnegative $g_1,g_2 \in C(X,\Z)$ such that 
$f-g_1,f+g_2 \in Z_\varphi$. Since for all $\mu \in M_\varphi$,
\[
0=\int_X (g_1+g_2)d \mu \ge \int_X g_1 d \mu \ge 0, 
\]
we obtain $[f]=[g_1]=0$, i.e.\ $G_\varphi^+ \cap (-G_\varphi^+)=\{0\}$. Other 
requirements for $(G_\varphi,G_\varphi^+)$ to be an ordered group are 
readily verified. 
\end{proof}
\begin{definition}
Clopen subsets $A$ and $B$ of $X$ are said to be {\em countably Hopf-equivalent} 
if there exist $\{n_i \in \Z|i \in \Z^+\}$ and disjoint unions 
\[
A=\bigcup_{i \in \N}A_i \cup \{x_0\} \text{ and } 
B=\bigcup_{i \in \N}B_i \cup \{y_0\}
\]
into nonempty clopen sets $A_i,B_i$ and singletons $\{x_0\}, \{y_0\}$ such that 
\begin{enumerate}
\item
$\varphi^{n_0}(x_0)=y_0$ and $\varphi^{n_i}(A_i)=B_i$ for every $i \in \N;$
\item
the map $\alpha:A \to B$ defined by 
\[
\alpha(x)=
\begin{cases}
\varphi^{n_i}(x) & \text{ if } x \in A_i \text{ and } i \in \N; \\
y_0 & \text{ if } x=x_0
\end{cases}
\]
is a homeomorphism. 
\end{enumerate}
We shall refer to $\alpha$ as a {\em countable equivalence map} from $A$ onto 
$B$.  
\end{definition}

\begin{lemma}\label{three}
Suppose $\varphi$ is minimal. Let $A,B \subset X$ be clopen. Then, the 
following are equivalent$:$
\begin{enumerate}[{\rm (1)}]
\item\label{the_relation}
$A \ge B;$
\item\label{CEM}
there is a countable equivalence map from $B$ into $A;$
\item\label{in_D}
$[\chi_A]-[\chi_B] \in D_\varphi:= 
\{[\chi_C] \in G_\varphi|C \subset X \text{ is clopen.}\}$.
\end{enumerate}
\end{lemma}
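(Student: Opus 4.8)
\emph{Proof strategy.} First I would dispose of the degenerate case: if $X$ is finite then minimality forces $X$ to be a single cyclic orbit, $M_\varphi$ consists of one equidistributed measure, and $(1)$, $(2)$, $(3)$ are then all trivially true (hence pairwise equivalent); so from now on assume $X$ is infinite, hence perfect. Two standard consequences of minimality will be used repeatedly: every nonempty clopen set has positive measure for every $\mu\in M_\varphi$ (since finitely many $\varphi$-iterates of it cover $X$), and every singleton is $\mu$-null for every $\mu$ (there are no periodic points). The plan is then to prove the cycle $(3)\Rightarrow(1)\Rightarrow(2)\Rightarrow(3)$. Of these, $(3)\Rightarrow(1)$ is immediate: if $[\chi_A]-[\chi_B]=[\chi_C]$ with $C$ clopen, then $\mu(A)-\mu(B)=\mu(C)$ for every $\mu\in M_\varphi$, and this common value is either $0$ for all $\mu$ (when $C=\emptyset$) or positive for all $\mu$ (when $C\neq\emptyset$); in both cases $A\ge B$.

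For $(1)\Rightarrow(2)$ I would split according to the two alternatives in the definition of $\ge$. If $\mu(A)=\mu(B)$ for every $\mu$, then by the Glasner--Weiss results \cite{GW} the clopen sets $A$ and $B$ are countably Hopf-equivalent, and the corresponding equivalence map is a countable equivalence map from $B$ onto, hence into, $A$. If instead $\mu(A)>\mu(B)$ for every $\mu$ --- a uniform gap, since $M_\varphi$ is weak-$*$ compact and $\chi_A-\chi_B$ is continuous --- then \cite{GW} produces a clopen $A'\subseteq A$ that is countably Hopf-equivalent to $B$ and for which $A\setminus A'$ is clopen; composing the equivalence map $B\to A'$ with the inclusion $A'\hookrightarrow A$ gives the desired countable equivalence map from $B$ into $A$.

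For $(2)\Rightarrow(3)$, let $\alpha$ be a countable equivalence map from $B$ into $A$, coming with a partition $B=\bigcup_i B_i\cup\{y_0\}$, integers $n_i$, pairwise disjoint clopen sets $B_i':=\varphi^{n_i}(B_i)\subseteq A$, and a point $z_0:=\varphi^{n_0}(y_0)\in A$, so that $\alpha(B)=\bigcup_i B_i'\cup\{z_0\}$. Using $\varphi$-invariance and nullity of singletons, $\mu(\alpha(B))=\sum_i\mu(B_i')=\sum_i\mu(B_i)=\mu(B)$ for every $\mu$. Since $B$ is compact and $\alpha$ continuous, $\alpha(B)$ is closed, so $A\setminus\alpha(B)$ is open. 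If $A\setminus\alpha(B)=\emptyset$, then $\mu(A)=\mu(B)$ for every $\mu$ and $[\chi_A]-[\chi_B]=[\chi_\emptyset]\in D_\varphi$. Otherwise, zero-dimensionality provides a nonempty clopen $W\subseteq A\setminus\alpha(B)$, so $\mu(A)\ge\mu(\alpha(B))+\mu(W)>\mu(B)$ for every $\mu$; arguing exactly as in the second case of $(1)\Rightarrow(2)$, I obtain a clopen $A'\subseteq A$ countably Hopf-equivalent to $B$ with $A\setminus A'$ clopen, and since countably Hopf-equivalent clopen sets carry equal measure for every $\mu$, $[\chi_{A\setminus A'}]=[\chi_A]-[\chi_{A'}]=[\chi_A]-[\chi_B]\in D_\varphi$, which is what was wanted.

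The one substantial ingredient is the Glasner--Weiss construction invoked in $(1)\Rightarrow(2)$ and $(2)\Rightarrow(3)$: for clopen sets $U,V$ with $\mu(U)<\mu(V)$ for all $\mu$, it produces a clopen copy of $U$ inside $V$ (up to countable Hopf-equivalence) whose complement in $V$ is clopen; this is the Kakutani--Rokhlin castle argument of \cite{GW}, which I would quote rather than reprove. The only genuinely topological point in the remaining bookkeeping is the observation that $\alpha(B)$ is closed --- this is what forces $A\setminus\alpha(B)$ to be open, and hence to contain a clopen set when nonempty --- and everything else is elementary manipulation of measures and clopen sets.
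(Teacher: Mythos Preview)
Your argument is correct and rests on the same ingredient as the paper's: the Glasner--Weiss result \cite[Proposition~2.6]{GW} handles the equivalence $(1)\Leftrightarrow(2)$, and $(3)\Rightarrow(1)$ is the immediate measure computation you give. The only divergence is in $(2)\Rightarrow(3)$. By the paper's definition, a countable equivalence map is a homeomorphism \emph{between clopen sets}, so ``a countable equivalence map from $B$ into $A$'' has clopen image $\alpha(B)\subset A$; the paper then finishes in one line via
\[
[\chi_A]-[\chi_B]=[\chi_A]-[\chi_{\alpha(B)}]=[\chi_{A\setminus\alpha(B)}]\in D_\varphi.
\]
You instead treat $\alpha(B)$ as merely closed, use a clopen $W\subset A\setminus\alpha(B)$ to recover the strict inequality $\mu(A)>\mu(B)$, and then invoke Glasner--Weiss a second time to manufacture a clopen $A'\subset A$ with $[\chi_{A\setminus A'}]=[\chi_A]-[\chi_B]$. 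That works, but it is effectively proving $(2)\Rightarrow(1)$ and then reusing your own $(1)\Rightarrow(2)$ to reach $(3)$; once you accept that $\alpha(B)$ is clopen, this detour collapses to the paper's one-liner.
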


\begin{proof}
By \cite[Proposition~2.6]{GW}, $\eqref{the_relation}$ is equivalent to 
$\eqref{CEM}$. If $\alpha:B \to \alpha(B) \subset A$ is a 
countable equivalence map, then 
\[
[\chi_A]-[\chi_B]=[\chi_A]-[\chi_{\alpha(B)}]=[\chi_{A \setminus \alpha(B)}] 
\in D_\varphi.
\]
Hence, $\eqref{CEM}$ implies $\eqref{in_D}$. If $[\chi_A]-[\chi_B]=[\chi_C]$ for 
some clopen set $C \subset X$, then $\int (\chi_A-\chi_B)d \mu=\mu(C) \ge 0$ 
for all $\mu \in M_\varphi$. Since given a clopen set $C \subset X$, either 
$\mu(C)=0$ for all $\mu \in M_\varphi$, or $\mu(C)>0$ for all 
$\mu \in M_\varphi$, $\eqref{in_D}$ implies $\eqref{the_relation}$. 
This completes the proof. 
\end{proof}

\begin{remark}\label{incomp}
One can find a homeomorphism having incomparable clopen sets. 
Let $(X,\varphi)$ be a Cantor minimal system such that 
$K^0(X,\varphi)$ is order isomorphic to $\Q^2$  with the 
strict ordering by an isomorphism $\iota$ mapping the canonical order unit 
$[\chi_X]$ to $(1,1)$; see for details \cite{GPS,DHS,HPS}. 
The homeomorphism $\varphi$ has exactly two ergodic probability measures 
corresponding to states $\tau_i:\Q^2 \to \Q$ ($i=1,2$) which are the projections 
to the $i$\nobreakdash-th coordinate. 
By \cite[Lemma~2.4]{GW}, there exist clopen sets $C,D \subset X$ such that 
$\iota([\chi_C])=(1/2,1/3)$ and $\iota([\chi_D])=(1/2,2/3)$, which are 
incomparable. 
\end{remark}

\section{A proof of the theorem}
 
$\eqref{COMP} \Rightarrow \eqref{TOTALLY}$: 
We first show that $\varphi$ has a 
unique minimal set on which any $\mu \in M_\varphi$ is supported. 
Let $Y$ be a minimal set. Suppose $\mu \in M_\varphi$ is supported on $Y$. 
Suppose $\nu \in M_\varphi$ is different from $\mu$. Assume $\nu(A)>0$ 
for a clopen set $A \subset X \setminus Y$. Define $\nu^\prime \in M_\varphi$ 
by $\nu^\prime(U)=\nu(U \setminus Y)/\nu(X \setminus Y)$ for a measurable set 
$U$. By regularity, there exists a clopen set $B$ containing $Y$ such that 
$\nu^\prime(B) < \nu^\prime(A)$. However, $\mu(B)=1$ and $\mu(A)=0$. 
This contradicts~\eqref{COMP} . 

In the remainder of this proof, we tacitly use Lemma~\ref{three}. The fact 
proved in the preceding paragraph allows us to assume the minimality of $\varphi$. 
Given $a \in G_\varphi$, choose $\{a_i,b_j \in D_\varphi \setminus 
\{0\} | 1 \le i \le n, 1 \le j \le m\}$ so that 
$a=a_1+a_2+\dots +a_n-b_1-b_2- \dots -b_m$.
 
The following procedure consisting of steps determines $a \ge 0$ or $a \le 0$. 
\paragraph{{\bf Step~1}}
If $\sum_{i=1}^na_i-b_1 \le 0$, then $a \le 0$, and the procedure ends. 
Otherwise, there is $k_1$ for which 
$c_{k_1} := \sum_{i=1}^{k_1}a_i-b_1 \in D_\varphi \setminus \{0\}$ and 
$a =c_{k_1}+a_{k_1+1}+\dots + a_n-b_2-b_3- \dots -b_m$. By this operation, 
the number of terms $b_i$ decreases by one. We may 
write $a=a_{k_1}+a_{k_1+1}+\dots + a_n-b_2-b_3- \dots -b_m$. 
\paragraph{{\bf Step~2}} If $\sum_{i=k_1}^na_i - b_2 \le 0$, then $a \le 0$, and 
the procedure ends. Otherwise, there is $k_2 \ge k_1$ for which 
$
c_{k_2} :=\sum_{i=k_1}^{k_2}a_i-b_2 \in D_\varphi \setminus \{0\}$ and 
$a =c_{k_2}+a_{k_2+1}+ \dots +a_n-b_3-b_4-\dots -b_m$. 
By this operation, the number of terms $b_i$ decreases by one. We may 
write $a=a_{k_2}+a_{k_2+1}+\dots + a_n -b_3-b_4- \dots -b_m$. 

Now, it is clear how we should execute each step. The procedure necessarily 
ends by Step $m$. We obtain $a \ge 0$ exactly when the procedure ends at Step 
$m$.

$\eqref{TOTALLY} \Rightarrow \eqref{UE}$: 
Assume the existence of a clopen set $A \subset X$ such that 
$c_2:=\inf_{\mu \in M_\varphi}\int \chi_A d\mu < \sup_{\mu \in M_\varphi} 
\int \chi_A d\mu =:c_1$. 
Let $\mu_1,\mu_2 \in M_\varphi$ be so that $c_i=\int \chi_A d \mu_i$. 
Take $m,n \in \N$ so that $c_2<n/m<c_1$. Then, 
$\int (m\chi_A-n)d \mu_1>0$ and $\int (m\chi_A-n)d \mu_2<0$. This contradicts 
\eqref{TOTALLY}. This completes the proof of the theorem.

\begin{remark}
The proof of $\eqref{COMP} \Rightarrow \eqref{TOTALLY}$ is based on an idea 
implied in \cite[Subsection~5.4]{El}.
\end{remark}


\begin{thebibliography}{13}

\bibitem{BH}
M.~Boyle and D.~Handelman, {\it Orbit equivalence, flow equivalence 
and ordered cohomology}, Israel J. Math. {\bf 95} (1996), 169-210.

\bibitem{Danilenko} 
A.~Danilenko, {\it Strong orbit equivalence of locally compact {C}antor 
minimal systems}, Internat. J. Math. {\bf 12} (2001), no.~1, 
113-123. 

\bibitem{DHS}
F.~Durand, B.~Host and C.~Skau, {\it Substitutional dynamical systems, 
{B}ratteli diagrams and dimension groups}, Ergodic Theory Dynam. Systems 
{\bf 19} (1999), 953-993.

\bibitem{Effros}
E.~G.~Effros, {\it Dimensions and {$C^\ast$}-algebras}, 
CBMS Regional Conference Series in Mathematics, vol.~46, 
Amer. Math. Soc., Providence, R.I., 1981.

\bibitem{El}
G.~Elliott, {\it On totally ordered groups, and ${K}_0$}, 
Ring Theory (Waterloo, 1978) (D.~Handelman and J.~Lawrence, eds.), 
Lecture Notes in Math., vol.~734, Springer-Verlag, Berlin, 1979, 
pp.~1-49.

\bibitem{GPS}
T.~Giordano, I.~Putnam and C.~Skau, {\it Topological orbit equivalence and 
{$C^*$}-crossed products}, J. Reine Angew. Math. {\bf 469} (1995), 51-111.

\bibitem{GPS2} 
T.~Giordano, I~Putnam and C.~Skau, 
{\it Full groups of {C}antor minimal systems}, Israel J. Math., {\bf 111} 
(1999), 285-320.

\bibitem{GW} 
E.~Glasner and B.~Weiss, {\it Weak orbit equivalence of {C}antor minimal 
systems}, Internat. J. Math., {\bf 6} (1995), 559-579.

\bibitem{HKY}
T.~Hamachi, M.~S.~Keane and H.~Yuasa, 
{\it Universally measure preserving homeomorphisms of {C}antor minimal systems}, 
submitted to J. Anal. Math.

\bibitem{HPS}
R.~Herman, I.~Putnam and C.~Skau, 
{\it Ordered {B}ratteli diagrams, dimension groups and topological dynamics}, 
Internat. J. Math., {\bf 3} (1992), 827-864.

\bibitem{Matui}
H.~Matui,
{\it Topological orbit equivalence of locally compact {C}antor minimal systems}, 
Ergodic Theory Dynam. Systems, {\bf 22} (2002), 1871-1903.

\bibitem{Y1}
H.~Yuasa,
{\it Hopf-equivalences for zero-dimensional dynamical systems}, 
Japan. J. Math. (N.S.), {\bf 28} (2002), no.~2, 299-312.

\bibitem{Y5}
H.~Yuasa,
{\it Not finitely but countably {H}opf-equivalent clopen sets in a {C}antor 
minimal system}, Topol. Methods Nonlinear Anal., {\bf 33}, (2009), 
no.~2, 355-371.
\end{thebibliography}
\end{document}